\newcounter{num}
\newenvironment{enumer}
  {\begin{list}
    {\textbf{{\sl (\roman{num})}}}
    {\usecounter{num}
     \setlength{\topsep}{0.5ex}
     \setlength{\partopsep}{4ex}
     \setlength{\leftmargin}{5.5ex}
     \setlength{\labelwidth}{6ex}
     \setlength{\labelsep}{1ex}
     \setlength{\itemsep}{0ex}
     \setlength{\parsep}{0ex}
     \setlength{\itemindent}{0ex}
     \setlength{\listparindent}{0ex}
    }
   }
  {\end{list}}
\newcommand{\R}{\mathbb{R}}
\newcommand{\rmd}{\mathrm{d}}
\newcommand{\fa}{\quad \mbox{for all }\,}
\begin{document}

\title{Transit times and mean ages for nonautonomous and autonomous compartmental systems}
\titlerunning{Transit times and mean ages for compartmental systems}

\author{Martin Rasmussen \and Alan Hastings \and Matthew J.~Smith \and Folashade B.~Agusto \and Benito M.~Chen-Charpentier \and Forrest M.~Hoffman \and Jiang Jiang \and Katherine E.O.~Todd-Brown \and Ying Wang \and Ying-Ping Wang \and Yiqi Luo}

\date{\today}

\authorrunning{Martin Rasmussen et al.}

\institute{
Martin Rasmussen, Department of Mathematics, Imperial College London, UK \and
Alan Hastings, Department of Environmental Science and Policy, University of California, Davis, USA \and
Matthew J.~Smith, Computational Science Laboratory, Microsoft Research, Cambridge, UK \and
Folashade B.~Agusto, Department of Mathematics and Statistics, Austin Peay State University, USA \and
Benito M.~Chen-Charpentier, Department of Mathematics, University of Texas, Arlington, Texas, USA \and
Forrest M.~Hoffman, Oak Ridge National Laboratory, Computational Earth Sciences Group, Tennessee, USA \and
Jiang Jiang, Department of Microbiology and Plant Biology, University of Oklahoma, Norman, Oklahoma, USA\and
Katherine E.O.~Todd-Brown, Earth and Biological Sciences Directorate, Pacific Northwest National Laboratory, Richland, State of Washington, USA,
and Department of Microbiology and Plant Biology, University of Oklahoma, Norman, Oklahoma, USA \and
Ying Wang, Department of Mathematics, University of Oklahoma, Norman, Oklahoma, USA \and
Ying-Ping Wang, CSIRO Ocean and Atmosphere Flagship, Victoria, Australia \and
Yiqi Luo, Department of Microbiology and Plant Biology, University of Oklahoma, Norman, Oklahoma, USA}

\maketitle

\begin{abstract}
   We develop a theory for transit times and mean ages for nonautonomous compartmental systems. Using the McKendrick--von F{\"o}rster equation, we show that the mean ages of mass in a compartmental system satisfy a linear nonautonomous ordinary differential equation that is exponentially stable. We then define a nonautonomous version of transit time as the mean age of mass leaving the compartmental system at a particular time and show that our nonautonomous theory generalises the autonomous case. We apply these results to study a nine-dimensional nonautonomous compartmental system modeling the terrestrial carbon cycle, which is a modification of the Carnegie--Ames--Stanford approach (CASA) model, and we demonstrate that the nonautonomous versions of transit time and mean age differ significantly from the autonomous quantities when calculated for that model.
\end{abstract}

\keywords{Carbon cycle, CASA model, Compartmental system, Exponential stability, Linear system, McKendrick--von F{\"o}rster equation, Mean age, Nonautonomous dynamical system, Transit time}

\subclass{34A30, 34D05}

\section{Introduction}

Compartment models play an important role in the modeling of many biological systems ranging from pharmacokinetics to ecology \cite{Anderson_83_1,Godfrey_83_1,Jacquez_93_1}.  Key values in understanding the dynamics of these systems are the transit time: the mean time a particle spends in the compartmental system measured as the mean time from entry into the system to leaving the system \cite{Bolin_73_1,Eriksson_71_1}, and the mean age: the mean age of particles still in the system \cite{Bolin_73_1,Eriksson_71_1}. It is well known that these quantities need not be the same \cite{Bolin_73_1,Eriksson_71_1,Rothman_15_1}.

We are motivated by an interest in studying the dynamics of the terrestrial carbon cycle which is typically modeled as a number of discrete pools of carbon in plant biomass, litter and soil organic matter. Many of the best studied models of the dynamics of carbon are linear, which reflects the fact that changes in carbon pools are proportional to the pool size \cite{Bolker_98_1}.  Additionally, most analyses make the further assumption that all parameters describing the dynamics (and the input fluxes) are constant in time, leading to a model in the form of an autonomous linear differential equation.  In this autonomous case, it is possible to derive analytic formulae giving expressions for the transit time \cite{GarciaMeseguer_03_1,Manzoni_09_1}. These formulae for transit time are given in terms of (constant) transfer coefficients among compartments and analogous formulae are available for the mean age of particles in the system.

Many applications of models of terrestrial carbon relate to situations in which constant model parameters are replaced by time-dependent functions. Perhaps the most well-known examples are studies of how terrestrial carbon dynamics respond to climate change. In these, it is often assumed that the specific rates (per unit carbon) of carbon inputs and losses from the system change over time as a function of changes in climate, such as temperature. For example, increases in temperature are normally assumed to increase the rates of soil decomposition \cite{Lloyd_94_1,Orchard_83_1,Rothman_15_1}. As a consequence, the compartmental models of interest are nonautonomous, i.e.~they depend on time \cite{Luo_15_1,Luo_01_1,Xia_12_1}. Nonautonomous compartmental systems are special cases of linear nonautonomous differential equations \cite{Kloeden_11_2}, which, in contrast to the linear autonomous case, cannot be solved analytically in general. Yet, both the mean age of particles in the system and the transit time remain of great interest for these time-dependent systems, as both quantities can be potentially measured in the actual systems being modeled \cite{Rothman_15_1,Trumbore_00_1}.

In this paper, we develop a theory for transit times and mean ages of mass in nonautonomous compartmental systems. As noted in one of the first papers to study transit time \cite{Bolin_73_1}, there is obviously a close connection between age distribution and transit time in compartment models. We will build on this relationship to develop an approach for understanding the definition of transit time. We define a time-dependent version of transit time as the mean age of mass leaving the compartmental system. We use a time-dependent version of the McKendrick--von F{\"o}rster equation \cite{Brauer_12_1,McKendrick_26_1,Thieme_03_1}, the classic first-order partial differential equation describing age distributions, to prove that the mean age of mass satisfies an (inhomogeneous) linear nonautonomous differential equation. We show that under weak conditions, this equation is exponentially stable. Starting with demographic models highlights another important aspect of our approach. As is well known, solutions of demographic models depend on initial conditions, so quantities like the mean age and transit time also depend on initial conditions, but conventional definitions of these quantities ignore the influence of the initial conditions. For this reason, our nonautonomous approach also provides additional insight for autonomous compartmental systems that are not in equilibrium.

We apply the theory we have developed to numerically study transit times for a nine-dimensional compartmental system model of the carbon cycle, which is a modified version of the Carnegie--Ames--Stanford approach (CASA) model \cite{Buermann_07_1,Potter_93_1,Randerson_96_1}. We compare our nonautonomous quantities to the classical notion of transit time for autonomous systems, where we freeze the nonautonomous system in time to obtain an autonomous system, and we assume that we are in equilibrium. Our simulations illustrate the different and sometimes diverging trajectories of the autonomous and nonautonomous quantities over time. Our results demonstrate the necessity of our theory for the computation of transit times in nonautonomous compartmental systems and in autonomous compartmental systems that are not in equilibrium.

This paper is organized as follows. In Section~\ref{sec3}, we first review the theory of transit times for autonomous compartmental systems, and we provide a heuristic derivation of the transit time formula. We then define nonautonomous compartmental systems in Section~\ref{sec1}. In Section~\ref{sec2}, we prove that under the assumption that the compartmental system is lower block triangular, and the diagonal blocks a diagonally dominant, the nonautonomous compartmental system is exponentially stable. In Section~\ref{sec4}, we prove that the mean ages satisfy a linear nonautonomous differential equation, and we then use the stability criterion from Section~\ref{sec2} to prove exponential stability of the mean age equation. We define the concept of a transit time for nonautonomous compartmental systems in Section~\ref{sec5}. In Section~\ref{sec6}, we show that our nonautonomous theory is consistent with the autonomous case, in the sense that we get exactly the well-known autonomous transit time formula when applying the nonautonomous transit time to an autonomous system. Finally, in Section~\ref{sec7}, we apply the theory to compute transit times for a nonautonomous compartmental model of the carbon cycle, which is a simplified version of the Carnegie--Ames--Stanford approach (CASA) model.

\section{Transit times and mean ages for autonomous compartmental systems}\label{sec3}

An open (linear) \emph{autonomous compartmental system} with both inputs and outputs \cite{Anderson_83_1,Godfrey_83_1,Jacquez_93_1} and with $d$ pools is described by an inhomogeneous linear differential equation
\begin{equation}\label{eqn8}
  \dot x = B x + s\,,
\end{equation}
where $B \in \R^{d\times d}$ is an invertible matrix, $0\not=s\in [0,\infty)^d$, and the entries $\{b_{ij}\}_{i,j\in\{1,\dots,d\}}$ of the matrix $B$ satisfy
\begin{enumer}
  \item[$\bullet$] $b_{ii} < 0$ for all $i\in\{1,\dots,d\}$,
  \item[$\bullet$] $b_{ij} \ge 0$ for all $i\not=j\in\{1,\dots,d\}$,
  \item[$\bullet$] $\sum_{i=1}^d b_{ij} \le 0$ for all $j\in\{1,\dots,d\}$.
\end{enumer}

The $i$-th row of the matrix $B$ describes the dynamics of the mass in pool $i$: $b_{ij}$ is the rate at which mass moves from pool~$j$ to pool~$i$, and $b_{ii}$ is the rate at which mass leaves the pool $i$ which includes transfer to other pools and losses from the system. The flux at which mass enters from outside the system to pool~$i$ is given by $s_i$.

We assume that the homogeneous linear system $\dot x = Bx$ is exponentially stable, i.e.~all eigenvalues of $B$ have negative real parts (this is fulfilled e.g.~when the matrix $B$ is strictly diagonally dominant). This means that \eqref{eqn8} has the exponentially stable equilibrium $x^* = -B^{-1}s$.

The concept of transit time for compartmental systems describes the mean time a particle spends in the compartmental system before it is released. There is a huge amount of literature on this topic, see e.g.~\cite{Anderson_83_1,Bolin_73_1,Eriksson_71_1,GarciaMeseguer_03_1,Manzoni_09_1}, but to our knowledge, the following simple derivation of the transit time formula has not been written down before.

Define $r_i$ as the mean (remaining) transit time in the system for a particle that has entered pool $i$ either from outside the system or from another pool, and note that the transit time in pool $i$ for a particle that has entered pool $i$ either from outside the system or from another pool is given by $-\frac{1}{b_{ii}}$.
Let $p_{ij}$ be the probability that a particle that enters pool $i$ goes next to pool $j$, and note that
\begin{displaymath}
  p_{ij} = -\frac{b_{ji}}{b_{ii}}\,.
\end{displaymath}

Next, note that the transit times for particles entering any pool $i$ must satisfy the equation
\begin{displaymath}
  r_i = -\frac{1}{b_{ii}} + \sum_{j \neq i} p_{ij}r_j\,,
\end{displaymath}
reflecting the fact that a particle in pool $i$ spends the average time $-\frac{1}{b_{ii}}$ in pool $i$, before it either leaves the system or moves with the probability $p_{ij}$ to pool $j$, after which it spends the mean time $r_j$ before it leaves the system.
This reads as
\begin{displaymath}
  r = \begin{pmatrix}
     0 & p_{12} &  & p_{1d}  \\
     p_{21}  & 0 &&p_{2d}\\
     & & \ddots & \\
     p_{d1} & p_{d2} &&0
     \end{pmatrix}
     r
     -
     \begin{pmatrix}
       \frac{1}{b_{11}} \\
       \vdots \\
       \frac{1}{b_{dd}}
     \end{pmatrix}\,,
\end{displaymath}
and multiplying the $i$-th row of this equation with $-b_{ii}$ yields
\begin{displaymath}
  0 = B^T r + (1,\dots,1)^T\,.
\end{displaymath}
Hence $r^T= -(1,\dots,1) B^{-1}$.

Let $\beta_i$ be the fraction of particles that enter the system from outside directly into pool $i$, i.e.
\begin{displaymath}
  \beta_i = \frac{s_i}{\sum_{i=1}^d s_i} \fa i\in\{1,\dots,d\}\,,
\end{displaymath}
and let $\beta=(\beta_1,\dots,\beta_d)^T$. Then the transit time for the whole system is given by
\begin{equation}\label{restimeautonomous}
  R = -r^T\beta = -(1,\dots,1) B^{-1} \beta\,.
\end{equation}
Note that this transit time is equal to the turnover time $U= \frac{(1,\dots,1)(x_1^*, \dots, x_d^*)^T}{(1,\dots,1)(s_1, \dots, s_d)^T}$ (see \cite{Bolin_73_1}),
which follows directly from $x^* = -B^{-1}s$.

We will show later that if the linear compartmental system \eqref{eqn8} is in the equilibrium $x^* = - B^{-1}s$, then the mean age of the particles in the system is given by
\begin{equation}\label{meanagesautonomous}
  M = -(1,\dots,1) B^{-1} \eta \,,
\end{equation}
where $\eta = (\eta_1,\dots,\eta_d)^T$, defined by
\begin{displaymath}
  \eta_i = \frac{x_i^*}{\sum_{j=1}^d x_j^*} \fa i\in\{1,\dots,d\}\,,
\end{displaymath}
describes how mass is distributed when the system is in equilibrium. It is well-known that the mean age $M$ is unequal to the transit time $R$ \cite{Bolin_73_1,Rothman_15_1}, and we will demonstrate this now by means of two very simple compartmental systems.

\begin{example}[Transit times and mean ages]\label{exam5}
  Consider the two compartmental systems
  \begin{equation}\label{eqn9}
    \dot x = \begin{pmatrix}
               -1 & 2 \\
               0.5 & -2
             \end{pmatrix} x + \begin{pmatrix}
                                 1 \\
                                 0
                               \end{pmatrix}
  \end{equation}
  and
  \begin{equation}\label{eqn10}
    \dot x = \begin{pmatrix}
               -1 & 1 \\
               1 & -2
             \end{pmatrix} x + \begin{pmatrix}
                                 1 \\
                                 0
                               \end{pmatrix}\,.
  \end{equation}
  It is easy to see that the transit times $r_1$ and $r_2$ for the two pools satisfies $r_1< r_2$ for \eqref{eqn9} and $r_1 > r_2$ for \eqref{eqn10}. This follows either from using the above explicit formula for the vector $r$, or by considering the fact that particles can only leave from pool $1$ in \eqref{eqn9} and from pool $2$ in \eqref{eqn10}. Since the transit time is given in both cases by $r_1$, and the mean age is a convex combination of $r_1$ and $r_2$, the transit time will be smaller than the mean age in \eqref{eqn9}, in contrast to the situation in \eqref{eqn10}.
\end{example}

\section{Nonautonomous compartmental systems}\label{sec1}

In contrast to the autonomous case, both the coefficient matrix $B$ and the input vector $s$ of a nonautonomous compartmental system are allowed to depend on time.

\begin{definition}[Nonautonomous compartmental system]\label{def2}
  Let $I:=(\tau,\infty)$ with $\tau\in\{-\infty\}\cup\R$ be a time interval, $B :I \to \R^{d\times d}$ be a bounded continuous function of invertible matrices and $s:I\to [0,\infty)^d$ be a bounded continuous function.
  A (linear) \emph{nonautonomous compartmental system} with $d$ pools is given by an inhomogeneous linear nonautonomous differential equation
  \begin{equation}\label{eqn1}
    \dot x = B(t) x + s(t)\,,
  \end{equation}
  where we assume that the entries $\{b_{ij}(t)\}_{i,j\in\{1,\dots,d\}}$ of the matrix $B(t)$ satisfy
  \begin{enumer}
    \item[$\bullet$] $b_{ii}(t) < 0$ for all $i\in\{1,\dots,d\}$ and $t\in I$,
    \item[$\bullet$] $b_{ij}(t) \ge 0$ for all $i\not=j\in\{1,\dots,d\}$ and $t\in I$,
    \item[$\bullet$] $\sum_{i=1}^d b_{ij}(t) \le 0$ for all $j\in\{1,\dots,d\}$ and $t\in I$.
  \end{enumer}
\end{definition}

Let $\Phi:I\times I\to \R^{d\times d}$ denote the \emph{transition operator} of
the corresponding homogeneous equation $\dot x = B(t)x$, i.e.~the function $t\mapsto \Phi(t,t_0)x_0$ is the solution to $\dot x = B(t)x$ fulfilling the initial condition $x(t_0) = x_0$. Then the maximal solution to \eqref{eqn1} satisfying the initial condition $x(t_0) = x_0$ is given by
\begin{equation}\label{eqn5}
  \varphi(t,t_0,x_0) := \Phi(t,t_0)x_0 + \int_{t_0}^t \Phi(t,u)s(u)\,\rmd u \fa t\in I\,.
\end{equation}

In contrast to the autonomous case, nonautonomous compartmental systems of dimension two or higher are not explicitly solvable in general. Solutions can be obtained for systems with no feedbacks between pools, as the following example demonstrates.

\begin{example}[Explicitly solvable nonautonomous two-pool model]\label{exam3}
  The nonautonomous compartmental system
  \begin{displaymath}
    \dot x = \begin{pmatrix}
     b_{11}(t)& 0 \\
     b_{21}(t)  & b_{22}(t)
     \end{pmatrix}x
     + \begin{pmatrix}
         0 \\
         s_2(t)
       \end{pmatrix}\,,
  \end{displaymath}
  where $b_{11}(t), b_{22}(t)<0$, $b_{21}(t)\ge 0$ and $s_2(t)>0$ for all $t\in I$,
  can be solved explicitly as follows: the general solution of the first equation is given by
  \begin{displaymath}
     x_1(t)= x_1^0 \exp\big(\textstyle\int_{t_0}^t b_{11}(u) \,\rmd u\big)\,,
  \end{displaymath}
  and thus, the second equation reads as
  \begin{displaymath}
    \dot x_2 = b_{22}(t)x_2 + b_{21}(t)x_1^0 \exp\big(\textstyle\int_{t_0}^t b_{11}(u) \,\rmd u\big) + s_2(t)\,,
  \end{displaymath}
  and can be solved using \eqref{eqn5}, since the equation is one-dimensional.
\end{example}

\section{Exponential stability of nonautonomous compartmental systems}\label{sec2}

In this section, we provide a sufficient condition for global exponential stability of the nonautonomous compartmental system \eqref{eqn1}. This criterion will concern only the homogeneous part of \eqref{eqn1}, i.e.~the matrix-valued function $B$, from which stability for the inhomogeneous equation follows. Since the result holds also for linear systems which are not compartmental systems, we formulate it more generally.

\begin{theorem}[Sufficient condition for exponential stability]\label{theo1}
  Consider the linear nonautonomous differential equation
  \begin{equation}\label{eqn4}
    \dot x = B(t) x
  \end{equation}
  with transition operator $\Phi:I\times I\to \R^{d\times d}$.
  Suppose that the function $B$ is of the form
  \begin{equation}\label{blockform}
    B(t) = \begin{pmatrix}
     B_{11}(t)& 0 & 0 & &0\\
     B_{21}(t)& B_{22}(t) & 0 & &0\\
     B_{31}(t)& B_{32}(t) & B_{33}(t) & &0\\
     & & & \ddots & \\
     B_{m1}(t)& B_{m2}(t)& B_{m3}(t) & & B_{mm}(t)\\
    \end{pmatrix}
  \end{equation}
  for $m\ge 1$ with bounded functions $B_{ij}:I\to \R^{d_i\times d_j}$. Note that  $\sum_{i=1}^m d_i = d$.  We assume that the linear subsystems $\dot x_n = B_{nn}(t)x_n$, $n\in\{1,\dots,m\}$, are strictly diagonally dominant, i.e.~there exists a $\delta>0$ such that
  \begin{enumer}
    \item[(i)] $(B_{nn}(t))_{ii} < 0 $ for all $t\in I$ and $i\in\{1,\dots,d_n\}$,
    \item[(ii)] $(B_{nn}(t))_{ij} \ge 0 $ for all $t\in I$ and $i\not=j\in\{1,\dots,d_n\}$,
    \item[(iii)] $\sum_{j=1}^{d_n} (B_{nn}(t))_{ij} \le -\delta$ for all $t\in I$ and $i\in\{1,\dots,d_n\}$.
  \end{enumer}
  Then the linear system \eqref{eqn4} is exponentially stable, i.e.~there exist constants $K\ge1$ and $\gamma>0$ such that
  \begin{equation}\label{rel1}
    \|\Phi(t,t_0)\|\le K e^{-\gamma(t-t_0)} \fa t\ge t_0 > \tau \,.
  \end{equation}
\end{theorem}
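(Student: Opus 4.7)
The plan is to exploit the block lower triangular structure of $B(t)$, reducing the problem to two ingredients: (a) each diagonal block subsystem $\dot x_n = B_{nn}(t) x_n$ is exponentially stable with a uniform rate $\delta$, and (b) the off-diagonal blocks of the transition operator $\Phi(t,t_0)$ inherit exponential decay via variation of constants and induction on the block index.

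For (a), I would use the logarithmic $\infty$-norm (matrix measure) $\mu_\infty$. Conditions (i)--(iii) combine to give, for each $n$ and $t\in I$,
\[
 \mu_\infty(B_{nn}(t)) = \max_{1\le i\le d_n}\Big((B_{nn}(t))_{ii} + \sum_{j\ne i}(B_{nn}(t))_{ij}\Big) = \max_{1\le i\le d_n}\sum_{j=1}^{d_n}(B_{nn}(t))_{ij} \le -\delta,
\]
since the off-diagonal entries of $B_{nn}(t)$ are nonnegative. The standard comparison estimate $\frac{\rmd}{\rmd t}\|\Phi_n(t,t_0)x_0\|_\infty \le \mu_\infty(B_{nn}(t))\,\|\Phi_n(t,t_0)x_0\|_\infty$ then yields $\|\Phi_n(t,t_0)\|_\infty \le e^{-\delta(t-t_0)}$ for all $t\ge t_0$, where $\Phi_n$ denotes the transition operator of the $n$-th diagonal subsystem.

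For (b), the block lower triangular structure of $B$ is preserved by $\Phi(t,t_0)$: writing $\Phi(t,t_0) = (\Phi_{ij}(t,t_0))_{1\le i,j\le m}$ in blocks matching the partition of $B$, one has $\Phi_{ij} \equiv 0$ for $i<j$ and $\Phi_{nn}=\Phi_n$. For $i>j$, variation of constants applied to the $i$-th block row produces the recursion
\[
 \Phi_{ij}(t,t_0) = \sum_{k=j}^{i-1}\int_{t_0}^t \Phi_{ii}(t,u)\,B_{ik}(u)\,\Phi_{kj}(u,t_0)\,\rmd u.
\]
Combined with the uniform bound on each $B_{ik}$ and the estimate from (a), an induction on $i-j$ yields estimates of the form $\|\Phi_{ij}(t,t_0)\|\le C_{ij}(t-t_0)^{i-j}e^{-\delta(t-t_0)}$, since iterated convolutions of $s\mapsto e^{-\delta s}$ accumulate polynomial prefactors. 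Fixing any $\gamma\in(0,\delta)$ and absorbing those prefactors via $(t-t_0)^k e^{-\delta(t-t_0)}\le K_k e^{-\gamma(t-t_0)}$ yields a uniform estimate of the form \eqref{rel1} for the entire transition operator $\Phi(t,t_0)$. The conceptual heart is the logarithmic-norm step, which is essentially forced by the hypotheses; the main obstacle is the combinatorial bookkeeping in the induction, tracking how convolutions stack polynomial factors while preserving a common exponential rate before trading it down to $\gamma<\delta$.
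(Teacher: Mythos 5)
Your proof is correct, but it takes a genuinely different route from the paper's. The first ingredient is the same in substance: you show each diagonal subsystem satisfies $\|\Phi_n(t,t_0)\|_\infty\le e^{-\delta(t-t_0)}$ via the logarithmic $\infty$-norm (conditions (i)--(iii) are a row-sum dominance condition, so the $\infty$-norm is indeed the right choice), whereas the paper simply cites Coppel's Proposition~3 for this step; the two are essentially the same estimate. Where you diverge is in passing from the diagonal blocks to the full triangular system: the paper invokes results of P\"otzsche and Battelli--Palmer stating that the dichotomy spectrum of a block triangular system is bounded above by that of its block diagonal part, which forces an extra case distinction when $I=\R$ because those results are formulated on half-lines. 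You instead prove the reduction by hand, observing that $\Phi(t,t_0)$ inherits the block lower triangular structure with $\Phi_{nn}=\Phi_n$, writing the off-diagonal blocks via the variation-of-constants recursion, and inducting on $i-j$ to get bounds of the form (polynomial of degree $i-j$ in $t-t_0$) times $e^{-\delta(t-t_0)}$, then trading $\delta$ down to any $\gamma<\delta$ to absorb the polynomial. (One small bookkeeping point: the prefactor should be a full polynomial of degree $i-j$ rather than the single monomial $(t-t_0)^{i-j}$, since the sum over $k$ in the recursion contributes all lower powers as well; this changes nothing in the conclusion.) Your argument is longer but self-contained, elementary, yields explicit constants, and works uniformly on any interval $I$ without the half-line caveat; the paper's argument is shorter at the cost of importing dichotomy-spectrum machinery for triangular systems.
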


\begin{proof}
  Assume first that $I$ is bounded below, and consider the linear systems
  \begin{equation}\label{eqn11}
    \dot x = B_{ii}(t)x
  \end{equation}
  for each $i\in\{1,\dots,m\}$. These systems are strictly diagonally dominant, and it follows from \cite[Proposition 3, page 55]{Coppel_78_1} that there exist $K_i\ge1$ and $\gamma_i>0$ such that
  \begin{displaymath}
     \|\Phi_i(t,t_0)\|\le K_i e^{-\gamma_i(t-t_0)} \fa t\ge t_0 > \tau \,,
  \end{displaymath}
  where $\Phi_i$ is the transition operator of \eqref{eqn11}. Next \cite[Theorem 4.1]{Poetzsche_16_1} or \cite[p.~540]{Battelli_15_1} yields that the dichotomy spectrum of \eqref{eqn4} is bounded above by $-\min_{i\in\{1,\dots,m\}} \gamma_i$. This in turn implies the claimed estimate \eqref{rel1}. In case $I=\R$, the results from \cite[Theorem 4.1]{Poetzsche_16_1} and \cite[p.~540]{Battelli_15_1} are not applicable directly, since they require the system to be defined on a half line, but the result follows by considering the two time intervals $(-\infty, 0)$ and $(0,\infty)$ separately (note that we are not interested in the dichotomy spectrum for the entire line, which is not determined by the block diagonal system; we only require an upper bound, which we get from the block diagonal system).
\end{proof}

The estimate \eqref{rel1} for the homogeneous system \eqref{eqn4} implies that any two solutions of the compartmental system \eqref{eqn1} converge to each other exponentially. More precisely, given two solutions $\mu_1, \mu_2:I\to\R^d$ of \eqref{eqn1}, then
\begin{displaymath}
  \|\mu_1(t)-\mu_2(t)\|\le K e^{-\gamma(t-t_0)} \|\mu_1(t_0)-\mu_2(t_0)\| \fa t\ge t_0 > \tau \,,
\end{displaymath}
which follows from the fact that the difference of these two solutions is a solution of the homogeneous system \eqref{eqn4}, for which the estimate \eqref{rel1} holds. This implies that any solution is \emph{forward attracting}, and in case the interval $I$ is unbounded below, then there also exists a unique \emph{pullback attracting} solution
\begin{equation}\label{eqn6}
  \nu(t):= \int^t_{-\infty} \Phi(t,u)s(u)\,\rmd u\fa t\in I\,,
\end{equation}
see \cite{Aulbach_96_1}. This solution pullback attracts bounded sets $B\subset \R^d$, in the sense of
\begin{displaymath}
  \lim_{t_0\to-\infty} \operatorname{dist}\big(\varphi(t,t_0, B), \{\nu(t)\}\big) = 0 \fa t\in I\,,
\end{displaymath}
where $\varphi$ denotes the maximal solution defined in \eqref{eqn5} and $\operatorname{dist}$ denotes the Hausdorff distance. We refer to \cite{Kloeden_11_2,Rasmussen_07_1} for an introduction to forward and pullback attractors of nonautonomous dynamical systems.

\section{The mean age system}\label{sec4}

We prove in this section that the mean ages of mass in a nonautonomous compartmental system are solutions of a linear nonautonomous differential equation, which we call the \emph{mean age system}. We derive this result from the evolution of age distributions, given by the well-known McKendrick--von F{\"o}rster equation \cite{McKendrick_26_1,Brauer_12_1,Thieme_03_1}, which is a linear first order partial differential equation. We also prove that the mean age system is exponentially stable under additional weak assumptions, by applying the theory developed in Section~\ref{sec2}.

The mean age system is pivotal for the analysis of transit times for nonautonomous compartmental systems, since in order to compute the average time the mass spends in the system, we do not need to look at the full age distribution of ages, but only at the mean ages.

Let $p_i(a,t)$ be the density function on age $a$ for the mass in pool $i$ at time $t$, where the age is the time since the mass entered the system. Note that the following formulation is valid in principle even if all rates are age-dependent, i.e.~$b_{ij}$ also depends on $a$, but we will not treat this situation here. The McKendrick--von F{\"o}rster  equation is given by
\begin{equation}
  \frac{\partial p_i}{\partial t}+\frac{\partial p_i}{\partial a}= \sum_{j=1}^d b_{ij}(t)p_j
\label{eq:1}
\end{equation}
with boundary condition
\begin{equation}\label{eq:2}
  p_i(0,t)=s_i(t)\,.
\end{equation}
Note that one also needs to specify initial conditions $p_i(a,0)$.

A componentwise solution can be written as follows.  Note that if there are no loops, the solution is explicit, otherwise it is only implicit.
The formula for $t>a$ is
\begin{align*}
  p_i(t,a) &= s_i(t-a)\exp(\textstyle\int_{t-a}^t b_{ii}(u)\,\rmd u)\\
  & \quad+ \sum_{j \neq i} \int_0^a\left(b_{ij}(t-\sigma)p_j(a-\sigma,t-\sigma)\exp(\textstyle\int_{t-\sigma}^tb_{ii}(u) \,\rmd u)\right)\,\rmd\sigma\,.
\end{align*}
Note that an analogous formula for $a<t$ exists.

We are particularly interested in the transit time of \eqref{eqn1} at a particular time $t$, which corresponds to the mean age of mass leaving the system at time $t$. For this purpose, we do not need the full age distribution determined by \eqref{eq:1}, since the situation is fully described by the mean age of mass in pool $i$, denoted as $\bar a_i (t)$. The following theorem says that the evolution of the mean ages is determined by an ordinary differential equation.

\begin{theorem}[Mean age system]
  Consider the nonautonomous compartmental system \eqref{eqn1} with a fixed solution $t\mapsto (x_1(t),\dots,x_d(t))$ of positive entries. Let $p_i(a,t)$ be the density function on age $a$ for the mass in pool $i$ at time $t$ (note that $\int_0^\infty p_i(a,t)\,\rmd a = x_i(t)$), and define the \emph{mean age} of mass in pool $i$ by
  \begin{displaymath}
    \bar a_i(t) = \frac{\int_0^\infty a p_i(a,t)\,\rmd a}{\int_0^\infty p_i(a,t)\,\rmd a} \fa i\in\{1,\dots,d\}\,.
  \end{displaymath}
  Then the mean ages $\bar a(t) =(\bar a_1(t), \dots, \bar a_d(t))$ solve the ordinary differential equation
  \begin{equation}\label{eqn2}
    \dot{\bar a} = g(t, x, \bar a)\,,
  \end{equation}
  with
  \begin{displaymath}
    g_i(t,x,\bar a) = 1+ \frac{\sum_{j=1}^d (\bar a_j-\bar a_i)b_{ij}(t) x_j(t)-\bar a_is_i(t)}{x_i(t)} \fa i\in\{1,\dots,d\}\,.
  \end{displaymath}
\end{theorem}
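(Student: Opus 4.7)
The plan is to differentiate $\bar a_i(t)$ in time, substitute the McKendrick--von F\"orster equation \eqref{eq:1} to turn time derivatives into age derivatives plus exchange terms, and then use integration by parts in the age variable to eliminate the age derivative. Define the zeroth and first moments
\begin{displaymath}
  x_i(t) = \int_0^\infty p_i(a,t)\,\rmd a, \qquad M_i(t) = \int_0^\infty a\, p_i(a,t)\,\rmd a,
\end{displaymath}
so that $\bar a_i(t) = M_i(t)/x_i(t)$ and
\begin{displaymath}
  \dot{\bar a}_i(t) = \frac{\dot M_i(t)}{x_i(t)} - \bar a_i(t)\,\frac{\dot x_i(t)}{x_i(t)}.
\end{displaymath}
I would first recover $\dot x_i$ by integrating \eqref{eq:1} over $a\in(0,\infty)$: the age-derivative term becomes $-[p_i(a,t)]_0^\infty = s_i(t)$ using the boundary condition \eqref{eq:2} and the fact that $p_i(a,t)\to0$ as $a\to\infty$, yielding $\dot x_i(t) = \sum_{j=1}^d b_{ij}(t) x_j(t) + s_i(t)$. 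This is exactly the pool equation \eqref{eqn1}, so it serves both as a consistency check and as the identity I will plug into $\dot{\bar a}_i$.

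Next I would compute $\dot M_i$ by multiplying \eqref{eq:1} by $a$ and integrating over $a\in(0,\infty)$. The mixing term gives $\sum_j b_{ij}(t)\int_0^\infty a p_j(a,t)\,\rmd a = \sum_j b_{ij}(t)\bar a_j(t) x_j(t)$. The age-derivative term is handled by integration by parts,
\begin{displaymath}
  \int_0^\infty a\, \frac{\partial p_i}{\partial a}(a,t)\,\rmd a = \bigl[a\, p_i(a,t)\bigr]_0^\infty - \int_0^\infty p_i(a,t)\,\rmd a = -x_i(t),
\end{displaymath}
provided $a\,p_i(a,t)\to0$ as $a\to\infty$ (which is the standard decay assumption needed for finite mean age). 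Combining,
\begin{displaymath}
  \dot M_i(t) = x_i(t) + \sum_{j=1}^d b_{ij}(t)\,\bar a_j(t)\,x_j(t).
\end{displaymath}

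Finally I would assemble these two expressions into the quotient-rule formula for $\dot{\bar a}_i$:
\begin{displaymath}
  \dot{\bar a}_i = 1 + \frac{\sum_j b_{ij}(t)\bar a_j(t)x_j(t)}{x_i(t)} - \bar a_i(t)\,\frac{\sum_j b_{ij}(t) x_j(t) + s_i(t)}{x_i(t)},
\end{displaymath}
and collect the $b_{ij}$ terms to obtain $g_i(t,x,\bar a)$ exactly in the stated form. The algebra here is routine; the only real issue is justifying the boundary terms in the integration by parts and differentiation under the integral sign. I expect that to be the main technical hurdle: one needs the density $p_i(a,t)$ and $a p_i(a,t)$ to vanish as $a\to\infty$, with sufficient regularity and integrability in $a$ so that the moment integrals are finite and can be differentiated in $t$. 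Under the standing assumption that $\bar a_i(t)$ is finite (which is implicit in the statement) and the usual smoothness and decay assumed for the McKendrick--von F\"orster framework (see \cite{Brauer_12_1,Thieme_03_1}), these conditions are satisfied, and the identity above is exactly the claimed ODE \eqref{eqn2}.
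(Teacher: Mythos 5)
Your proposal is correct and follows essentially the same route as the paper: differentiate $\bar a_i = M_i/x_i$ by the quotient rule, substitute the McKendrick--von F\"orster equation for $\partial p_i/\partial t$, use $\int_0^\infty a\,\frac{\partial p_i}{\partial a}\,\rmd a = -x_i(t)$ via integration by parts, and insert $\dot x_i = \sum_j b_{ij}x_j + s_i$. Your extra step of recovering the pool equation from the PDE and the boundary condition $p_i(0,t)=s_i(t)$ is a sound consistency check that the paper omits (it simply takes $x$ to be a solution of the compartmental system), and your explicit attention to the decay of $a\,p_i(a,t)$ is the same implicit assumption the paper relies on.
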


\begin{proof}
  By using $\int_0^\infty a \frac{\partial p_i}{\partial a}(t,a)\,\rmd a = -x_i(t)$ (integration by parts), it follows that
  \begin{align*}
    \dot{\bar a}_i(t) &= \frac{x_i(t) \int_0^\infty a \frac{\partial p_i}{\partial t}(t,a) \,\rmd a- \bar a_i (t)x_i(t)\dot x_i(t)}{x_i^2(t)}\\
    & = \frac{x_i(t) \int_0^\infty a \left(-\frac{\partial p_i}{\partial a}(t,a)+\sum_{j=1}^db_{ij}(t)p_j(t,a)\right)\,\rmd a - \bar a_i(t)x_i(t)\dot x_i(t)}{x_i^2(t)}\\
    & = \frac{x_i^2(t) + \sum_{j=1}^{d} b_{ij}(t) x_i(t) \int_0^\infty a p_j(t,a) \,\rmd a- \bar a_i(t)x_i(t)\dot x_i(t)}{x_i^2(t)}\\
    &=  1+ \frac{\sum_{j=1}^{d} b_{ij}(t) x_j(t)\bar a_j(t) - \bar a_i(t) \left(\sum_{j=1}^d b_{ij}(t)x_j(t) + s_i(t)\right)}{x_i(t)}\\
    &=  1+ \frac{\sum_{j=1}^d (\bar a_j(t)-\bar a_i(t))b_{ij}(t) x_j-\bar a_i (t)s_i(t)}{x_i(t)}\,.
  \end{align*}
  This finishes the proof.
\end{proof}

Combining the equations \eqref{eqn1} and \eqref{eqn2} yields
\begin{equation} \label{eqn3}
  \left(\begin{array}{c}
    \dot x \\ \dot{\bar a}
  \end{array}\right) = \left(\begin{array}{c}
    B(t)x + s(t)\\
   g(t, x, \bar a)
  \end{array}\right)\,,
\end{equation}
which is a $2d$-dimensional ordinary differential equation of skew product type, i.e.~the $x$-equation does not depend on $\bar a$, but the equation for $\bar a$ depends on $x$.
Note that \eqref{eqn3} is a nonlinear equation, but given a solution $x(t)= (x_1(t), \dots, x_d(t))$ of \eqref{eqn1}, the age equation \eqref{eqn2} is
an inhomogeneous linear nonautonomous differential equation, which reads as
\begin{displaymath}
  \dot{\bar a} = A(t, x(t)) \bar a + (1,\dots,1)^T\,,
\end{displaymath}
where
\begin{displaymath}\scriptstyle
  A(t, x(t))  = X(t)^{-1}\begin{pmatrix}\scriptstyle
   -s_1(t) - \sum_{j\not=1} b_{1j}(t)x_j(t) \hspace{-0.5cm}& \scriptstyle b_{12}(t)x_2(t) &  &\scriptstyle b_{1d}(t)x_d(t)\\
   \scriptstyle b_{21}(t)x_1(t) &\scriptstyle -s_2(t) - \sum_{j\not=2} b_{2j}(t)x_j(t) \hspace{-0.5cm} & &\scriptstyle b_{2d}(t)x_d(t)\\
   &  & \ddots & \\
   \scriptstyle b_{d1}(t)x_1(t) & \scriptstyle b_{d2}(t)x_2(t) &  & \hspace{-0.5cm}\scriptstyle -s_d(t) - \sum_{j\not=d} b_{dj}(t)x_j(t)
  \end{pmatrix}
\end{displaymath}
with $X(t):= \operatorname{diag} (x_1(t), \dots, x_d(t))$ for all $t\in I$.

We will show now that under additional weak assumptions, the mean age equation is exponentially stable.

\begin{theorem}[Exponential stability of the mean age system]\label{theo2}
  Consider the nonautonomous compartmental system \eqref{eqn1} with a fixed solution $t\mapsto x(t)=(x_1(t),\dots,x_d(t))$ of positive entries that are bounded and bounded away from zero, and suppose that \eqref{eqn1} satisfies the assumptions of Theorem~\ref{theo1} with $\delta>0$. In addition, assume that
  \begin{enumer}
    \item[(a)] $s_i(t) \ge \delta$ for all $t\in I$ and $i\in\{1,\dots,d_1\}$, and
    \item[(b)] for all $n\in\{2,\dots,m\}$ and $i\in\big\{1+\sum_{k=1}^{n-1}d_k,2+\sum_{k=1}^{n-1}d_k,\dots,\sum_{k=1}^{n}d_k\big\}$, there exists a $j\in\{1,\dots,\sum_{k=1}^{n-1}d_k\}$ such that $b_{ij}(t) \ge \delta$ for all $t\in I$.
  \end{enumer}
  Then the mean age system \eqref{eqn2} is exponentially stable. More precisely, there exist $\bar\delta \in(0,\delta)$ and $\bar K > 0$ such that the transition operator $\Psi: I\times I \to \R^{d\times d}$ of the homogenous equation $\dot {\bar a} = A(t, x(t)) \bar a$ satisfies the estimate
  \begin{displaymath}
    \|\Psi(t,t_0)\|\le \bar K e^{-\bar \delta (t-t_0)} \fa t\ge t_0 > \tau \,.
  \end{displaymath}
\end{theorem}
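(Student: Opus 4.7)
The plan is to reduce the theorem to a direct application of Theorem~\ref{theo1}, by showing that the matrix $A(t,x(t))$ inherits the block lower triangular structure from $B(t)$ and that each of its diagonal blocks is strictly diagonally dominant, with a rate determined by the lower and upper bounds on the solution $x(t)$ together with the constants from hypotheses (a) and (b).

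First I would verify the block structure. Since $B(t)$ is block lower triangular with blocks $B_{nk}$, the off-diagonal entries $b_{ij}(t)x_j(t)/x_i(t)$ of $A(t,x(t))$ vanish whenever $i$ lies in an earlier block than $j$. Hence $A(t,x(t))$ is block lower triangular with the same block structure $(d_1,\dots,d_m)$ as $B(t)$. The entries of $A$ are clearly bounded on $I$, using that $x(t)$ is bounded and bounded away from zero and that $B$ and $s$ are bounded. The off-diagonal entries within a diagonal block are $b_{ij}(t)x_j(t)/x_i(t)\ge 0$, which takes care of condition (ii) of Theorem~\ref{theo1}.

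The heart of the proof is the row sum calculation within a diagonal block. Fix $n\in\{1,\dots,m\}$ and a row index $i$ in block $n$; write $J_n$ for the index set of block $n$ and $J_{<n}$ for the union of all earlier blocks' indices. Summing the entries of the $i$-th row of $A(t,x(t))$ across $J_n$ gives
\begin{displaymath}
  \sum_{j\in J_n} A_{ij}(t,x(t)) = \frac{1}{x_i(t)}\Bigl(-s_i(t) - \sum_{j\neq i} b_{ij}(t) x_j(t)\Bigr) + \sum_{j\in J_n,\, j\neq i} \frac{b_{ij}(t)x_j(t)}{x_i(t)} = -\frac{s_i(t) + \sum_{j\in J_{<n}} b_{ij}(t) x_j(t)}{x_i(t)}\,,
\end{displaymath}
where the crucial cancellation uses that $b_{ij}(t)=0$ for $j$ in blocks strictly later than $n$. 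Let $X_{\max}:=\sup_{t\in I}\max_i x_i(t)$ and $x_{\min}:=\inf_{t\in I}\min_i x_i(t)$, both finite and positive by assumption. For $n=1$, hypothesis (a) yields $s_i(t)\ge\delta$ and thus the row sum is at most $-\delta/X_{\max}$. For $n\ge 2$, hypothesis (b) supplies an index $j\in J_{<n}$ with $b_{ij}(t)\ge\delta$, giving $\sum_{j\in J_{<n}} b_{ij}(t)x_j(t)\ge\delta\, x_{\min}$, so the row sum is at most $-\delta\, x_{\min}/X_{\max}$. Setting $\bar\delta:=\min\{\delta,\delta x_{\min}\}/X_{\max}$ (shrunk if necessary to lie in $(0,\delta)$), conditions (i) and (iii) of Theorem~\ref{theo1} hold for the diagonal blocks of $A(t,x(t))$ uniformly in $t$.

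Applying Theorem~\ref{theo1} to the linear nonautonomous equation $\dot{\bar a} = A(t,x(t))\bar a$ then yields constants $\bar K\ge 1$ and a positive rate (at least $\bar\delta$, possibly after further shrinking to absorb the constant $K$ from Theorem~\ref{theo1}) such that the transition operator $\Psi$ satisfies the claimed estimate. The main technical care is in the row sum identity above: one must be sure that the "inside-block" contributions of the diagonal entry cancel precisely the off-block-diagonal row sum, leaving only a residual that is controlled by the source term (in the first block) or by the incoming flux from earlier blocks (in later blocks). Everything else is uniform boundedness bookkeeping using the a priori bounds on $x(t)$.
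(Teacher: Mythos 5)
Your proposal is correct and takes essentially the same route as the paper's proof: both reduce the claim to Theorem~\ref{theo1} by checking that $A(t,x(t))$ inherits the block lower triangular structure of $B(t)$, that its in-block off-diagonal entries are nonnegative, and that the in-block row sums are bounded above by a negative constant obtained from the full row sum $-s_i(t)/x_i(t)$ after discarding the nonnegative contributions from earlier blocks, using (a) for the first block and (b) for the later ones. Your version is in fact slightly more careful than the paper's about tracking the constants ($x_{\min}$, $X_{\max}$), but the argument is the same.
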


\begin{proof}
  We show now that the three conditions (i)--(iii) of Theorem~\ref{theo1} are satisfied with $\delta$ replaced by $\delta\min\{1, \min_{t\in I,i\in\{1,\dots,d\}} |x_i(t)|\}$. Note first that the matrix $A(t, x(t))$ has the same block decomposition as the matrix $B(t)$, which is described in \eqref{blockform}.

  Condition (i) of Theorem~\ref{theo1} follows from (a) (in case of $n=1$) or (b) (in case $n>1$; note that the sum of the entries in the $i$-th row of the matrix $A(t,x(t))$ equals to $-s_i(t)$, and (b) guarantees that the diagonal entry is negative even though $s_i(t)$ might be zero). Condition (ii) of Theorem~\ref{theo1} follows from the fact that the original system  \eqref{eqn1} is a compartmental system, and the solution $x(t)$ of \eqref{eqn1} has positive entries. Finally, condition (iii) of Theorem~\ref{theo1} follows from fact that the sum of the $i$-th row of the matrix $A(t,x(t))$ equals to $-s_i(t)$, and the positive contribution of at least $b_{ij}(t)x_j(t)\ge \delta \min_{t\in I,i\in\{1,\dots,d\}} |x_i(t)|$, with $i$ and $j$ chosen as in (b), will not be considered in the sum in condition (iii) of Theorem~\ref{theo1} and for this reason contributes negatively to this sum.
\end{proof}

A natural choice for the solution $t\mapsto x(t)$ in the above theorem is the exponentially stable solution defined in \eqref{eqn6} if the interval $I$ is unbounded below. If the interval $I$ is unbounded below, this will be the only bounded solution of the system, i.e.~the norm of all other solutions converges to $\infty$ in the limit $t\to-\infty$, so the solution \eqref{eqn6} is the only solution to which the theorem can be applied. However, if the interval $I$ is bounded below, then all solutions of the nonautonomous compartmental system \eqref{eqn1} are bounded and exponentially stable, and they are also bounded away from zero due to assumption (a) of Theorem~\ref{theo2}.

\section{Nonautonomous transit times}\label{sec5}

We define transit time as the mean age of mass leaving the system at a particular time $t$. Note that in our nonautonomous context, this quantity depends on the actual time $t$. We also provide a formula that corresponds to the mean age of mass currently residing in the compartmental system.

\begin{definition}[Nonautonomous transit time and mean age]\label{def1}
  Consider the skew product system \eqref{eqn3} consisting of the nonautonomous compartmental system \eqref{eqn1} and the mean age system \eqref{eqn2}. The \emph{transit time} of a solution $(x_1(t),\dots,x_d(t), \bar a_1(t), \dots, \bar a_d(t))$, $t\in I$, of this system is then defined as
  \begin{displaymath}
    R_t := \frac{\sum_{i=1}^d \bar a_i(t) x_i(t)\sum_{j=1}^d b_{ji}(t)}{\sum_{i=1}^d x_i(t)\sum_{j=1}^d b_{ji}(t)} \fa t\in I\,,
  \end{displaymath}
  and then \emph{mean age} of this solution is defined by
  \begin{displaymath}
    M_t := \frac{\sum_{i=1}^d \bar a_i(t) x_i(t)}{\sum_{i=1}^d x_i(t)} \fa t\in I\,.
  \end{displaymath}
\end{definition}

The transit time $R_t$ is the mean age of carbon leaving the system at time $t$, where as the mean age $M_t$ is the mean age of carbon in the system at time $t$.

Note that, in general, $R_t$ and $M_t$ are different, see Example~\ref{exam5} for the autonomous case. In the following example, we show that transit times and mean ages are the same for one-dimensional compartmental systems.

\begin{example}[Transit time and mean ages for one-dimensional compartmental systems]\label{exam1}
  Let $I\subset \R$ be an interval, and consider the one-dimensional nonautonomous compartmental system
  \begin{displaymath}
    \dot x = b(t) x +s(t)\,,
  \end{displaymath}
  where $b :I \to (-\infty,0)$ and $s:I\to(0,\infty)$ are bounded continuous functions. Fix a positive solution $t\mapsto x(t)$ of this system. Note that the solution is given explicitly by
  \begin{equation}\label{rel2}
     x(t)= x(t,t_0,x_0) = \exp\big(\textstyle\int_{t_0}^t b(u)\,\rmd u\big)x_0 +{\displaystyle \int_{t_0}^t }\exp\big(\textstyle\int_u^{t_0} b(v)\,\rmd v\big) s(u)\,\rmd u\,,
  \end{equation}
  where $t_0$ and $x_0$ are initial time and condition.
  Then the mean age equation is given by
  \begin{displaymath}
    \dot{\bar a} = -\frac{s(t)}{x(t)}\bar a + 1\,,
  \end{displaymath}
  and also this equation can be solved explicitly using \eqref{rel2}. Note that, in this one-dimensional context, the formulae for transit time and mean age from Definition~\ref{def1} are given by exactly the solution to this equation:
  \begin{displaymath}
    R_t= M_t = \bar a(t)\fa t\in I\,.
  \end{displaymath}
\end{example}

\section{Consistency with the autonomous case}\label{sec6}

In this section, we derive simple expressions for the transit time and mean age from Definition~\ref{def1} in the special case of an autonomous compartmental system. The expression for the autonomous transit time coincides with the heuristically obtained formula \eqref{restimeautonomous}, and we confirm the expression for the mean ages stated in \eqref{meanagesautonomous}.

Consider an autonomous compartmental system
\begin{equation}\label{ode1}
  \dot x = B x + s
\end{equation}
with an invertible matrix $B\in\R^{d\times d}$ and $s\in \R^d$. We assume that the homogeneous system $\dot x = Bx$ satisfies the assumptions of Theorems~\ref{theo1}~and~\ref{theo2}. Note that \eqref{ode1} has the exponentially stable equilibrium $x^* := -B^{-1}s$.

\begin{lemma}
  Consider the autonomous differential equation \eqref{ode1}. Then the mean age equation \eqref{eqn2} for the equilibrium $x^*$ reads as
  \begin{displaymath}
    \dot{\bar a} = \big(X^*\big)^{-1} B X^* \bar a + (1,\dots, 1)^T\,,
  \end{displaymath}
  and has the exponentially stable equilibrium
  \begin{displaymath}
    \bar a^* := -\big(X^*\big)^{-1}B^{-1}X^* (1, \dots, 1)^T\,,
  \end{displaymath}
  where $X^*:= \operatorname{diag} (x^*_1, \dots, x^*_d)$.
\end{lemma}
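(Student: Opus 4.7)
The plan is to specialize the general mean age equation of Section~\ref{sec4} to the constant solution $x(t)\equiv x^*$. Recall that along a positive solution $x(t)$ of \eqref{eqn1}, the mean age equation takes the form $\dot{\bar a} = A(t,x(t))\bar a + (1,\dots,1)^T$, where $A(t,x(t)) = X(t)^{-1}\tilde A(t,x(t))$, with $\tilde A$ the matrix whose off-diagonal entry in position $(i,j)$ is $b_{ij}(t)x_j(t)$ and whose $i$-th diagonal entry is $-s_i(t)-\sum_{j\neq i} b_{ij}(t)x_j(t)$. In the autonomous case everything is time-independent, so all I need to do is simplify these entries at $x^*$.

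First I would invoke the equilibrium identity $Bx^*+s=0$, which componentwise reads $s_i = -\sum_{j=1}^d b_{ij}x_j^*$. Substituting into the $i$-th diagonal entry of $\tilde A$ yields
\begin{displaymath}
  -s_i - \sum_{j\neq i} b_{ij}x_j^* = b_{ii}x_i^*\,,
\end{displaymath}
so that $\tilde A$ is precisely $BX^*$ (the off-diagonal entries $b_{ij}x_j^*$ already match $(BX^*)_{ij}$, and the diagonal has just been corrected to $b_{ii}x_i^* = (BX^*)_{ii}$). Left-multiplying by $(X^*)^{-1}$, which is well defined since $x_i^*>0$ for all $i$, produces the stated form $\dot{\bar a} = (X^*)^{-1}BX^* \bar a + (1,\dots,1)^T$.

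For the equilibrium, setting $\dot{\bar a}=0$ gives $(X^*)^{-1}BX^*\bar a^* = -(1,\dots,1)^T$, and invertibility of $B$ and $X^*$ yields the claimed formula $\bar a^* = -(X^*)^{-1}B^{-1}X^*(1,\dots,1)^T$. Exponential stability follows from a similarity argument: the coefficient matrix $(X^*)^{-1}BX^*$ is similar to $B$ and hence has identical spectrum, and the hypotheses imported from Theorem~\ref{theo1} guarantee that every eigenvalue of $B$ has negative real part. The only delicate step is the diagonal simplification that collapses $\tilde A$ into $BX^*$; once that is seen, the computation of the equilibrium and the stability statement are immediate.
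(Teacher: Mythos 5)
Your proposal is correct and follows essentially the same route as the paper: both specialize the mean age equation to the equilibrium $x^*$, use the identity $Bx^*=-s$ to collapse the diagonal entries $-s_i-\sum_{j\neq i}b_{ij}x_j^*$ into $b_{ii}x_i^*$, recognize the coefficient matrix as $(X^*)^{-1}BX^*$, and read off the equilibrium. Your added remark that exponential stability follows because $(X^*)^{-1}BX^*$ is similar to $B$ is a correct and slightly more explicit justification than the paper's, which simply asserts attractivity under the standing assumptions.
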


\begin{proof}
  Note that the mean age equation \eqref{eqn2} is given by
  \begin{displaymath}
    \dot{\bar a} =   \big(X^*\big)^{-1}\begin{pmatrix}\scriptstyle
     -s_1 - \sum_{j\not=1} b_{1j}x^*_j & \scriptstyle b_{12}x^*_2 &  &\scriptstyle b_{1d}x^*_d\\
     \scriptstyle b_{21}x^*_1 &\scriptstyle -s_2 - \sum_{j\not=2} b_{2j}x^*_j  & &\scriptstyle b_{2d}x^*_d\\
     &  & \ddots & \\
     \scriptstyle b_{d1}x^*_1 & \scriptstyle b_{d2}x^*_2 &  & \scriptstyle -s_d - \sum_{j\not=d} b_{dj}x^*_j
    \end{pmatrix}\bar a + \begin{pmatrix}
     1\\ \vdots\\ 1
    \end{pmatrix}\,.
  \end{displaymath}
  Since $Bx^* = -s$, we get $-s_i - \sum_{j\not= i} b_{ij}x^*_j = b_{ii} x^*_i$ for all $i\in\{1,\dots, d\}$, so the mean age equation \eqref{eqn2} gets simplified to
  \begin{align*}
    \dot{\bar a} &=   \big(X^*\big)^{-1}\begin{pmatrix}\scriptstyle
     b_{11}x^*_i & \scriptstyle b_{12}x^*_2 &  &\scriptstyle b_{1d}x^*_d\\
     \scriptstyle b_{21}x^*_1 &\scriptstyle b_{22}x^*_2  & &\scriptstyle b_{2d}x^*_d\\
     &  & \ddots & \\
     \scriptstyle b_{d1}x^*_1 & \scriptstyle b_{d2}x^*_2 &  & \scriptstyle b_{dd}x^*_d
    \end{pmatrix}\bar a + \begin{pmatrix}
     1\\ \vdots\\ 1
    \end{pmatrix}\\
    & = \big(X^*\big)^{-1} B X^* \bar a + (1,\dots, 1)^T\,.
  \end{align*}
  Hence the attractive equilibrium of \eqref{eqn2} is given by
  \begin{displaymath}
    \bar a^* := -\big(X^*\big)^{-1}B^{-1}X^* (1, \dots, 1)^T\,,
  \end{displaymath}
  which finishes the proof of this lemma.
\end{proof}

Let $\beta_i$ be the fraction of particles that enter the system from outside directly into pool $i$, i.e.
\begin{displaymath}
  \beta_i = \frac{s_i}{\sum_{i=1}^d s_i} \fa i\in\{1,\dots,d\}\,,
\end{displaymath}
and let $\beta=(\beta_1,\dots,\beta_d)^T$. Moreover, define $\eta = (\eta_1,\dots,\eta_d)^T$ by
\begin{displaymath}
  \eta_i = \frac{x_i^*}{\sum_{j=1}^d x_j^*} \fa i\in\{1,\dots,d\}\,,
\end{displaymath}
which describes how mass is distributed when the system is in equilibrium.
Note that $\sum_{i=1}^d \beta_i = \sum_{i=1}^d \eta_i= 1$.

\begin{proposition}[Autonomous transit times and mean ages]\label{theo3}
  Consider the autonomous compartmental system \eqref{ode1}. The transit time with respect to the equilibrium solution $t\mapsto(x^*, \bar a^*)$ does not depend on time and is given by
  \begin{displaymath}
    R= -(1,\dots,1) B^{-1} \beta\,,
  \end{displaymath}
  and the mean age of mass is given by
  \begin{displaymath}
    M= -(1,\dots,1) B^{-1} \eta\,.
  \end{displaymath}
\end{proposition}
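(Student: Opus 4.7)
The plan is to compute the expressions $M_t$ and $R_t$ from Definition~\ref{def1} directly at the equilibrium solution $(x^*, \bar a^*)$, exploiting two algebraic facts. The first is the closed form $\bar a^* = -(X^*)^{-1} B^{-1} X^* (1,\dots,1)^T$ established in the preceding lemma. The second is the identity $(1,\dots,1) B = (c_1, \dots, c_d)$, where $c_i := \sum_{j=1}^d b_{ji}$ is the $i$-th column sum of $B$; this turns the column-sum weights appearing in $R_t$ into a clean matrix expression. The strategy in every step is to orchestrate cancellations so that the diagonal factor $X^*$ collapses via $X^*(X^*)^{-1}=I$ and a product of the form $(1,\dots,1) B \cdot B^{-1}$ telescopes to $(1,\dots,1)$.

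For the mean age, I would rewrite the numerator $\sum_i \bar a^*_i x^*_i$ as $(1,\dots,1) X^* \bar a^*$, substitute the lemma's formula for $\bar a^*$, and use $X^*(1,\dots,1)^T = x^*$ to reduce the numerator to $-(1,\dots,1) B^{-1} x^*$. Dividing by the denominator $(1,\dots,1) x^* = \sum_j x^*_j$ converts $x^*$ into the probability vector $\eta$, yielding $M = -(1,\dots,1) B^{-1} \eta$.

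For the transit time, I would first rewrite the weights $\sum_j b_{ji}$ as the entries of the row vector $(1,\dots,1) B$, so that the numerator of $R_t$ becomes $(1,\dots,1) B X^* \bar a^*$. Substituting the expression for $\bar a^*$ then causes $B B^{-1}$ to cancel, leaving $-(1,\dots,1) x^* = -\sum_i x^*_i$. The denominator simplifies in one step to $(1,\dots,1) B x^* = -(1,\dots,1) s = -\sum_i s_i$, using the equilibrium relation $Bx^* = -s$. Substituting $x^* = -B^{-1} s$ and $\beta = s/\sum_j s_j$ into the resulting ratio $\sum_i x^*_i / \sum_i s_i$ then gives $R = -(1,\dots,1) B^{-1} \beta$.

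No step presents a genuine obstacle; the whole argument is routine linear algebra, and the only care needed is bookkeeping of signs and recognising that the column-sum identity is precisely what makes the leftmost $B$ and the $B^{-1}$ in $\bar a^*$ cancel. As a sanity check, the intermediate identity $R = \sum_i x^*_i / \sum_i s_i$ recovers the turnover-time formula $U$ noted just below \eqref{restimeautonomous}, and the formula for $M$ matches \eqref{meanagesautonomous}, confirming consistency with the earlier heuristic derivation.
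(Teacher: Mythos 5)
Your proposal is correct and follows essentially the same route as the paper's proof: both substitute the lemma's closed form $\bar a^* = -(X^*)^{-1}B^{-1}X^*(1,\dots,1)^T$ into the numerators $(1,\dots,1)BX^*\bar a^*$ and $(1,\dots,1)X^*\bar a^*$, cancel $X^*(X^*)^{-1}$ and $BB^{-1}$, and use $Bx^*=-s$ for the denominator of $R_t$. Your observation that the intermediate identity $R=\sum_i x_i^*/\sum_i s_i$ recovers the turnover time is a nice consistency check that the paper makes separately in Section~\ref{sec3}.
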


\begin{proof}
  Using Definition~\ref{def1}, we have
  \begin{align*}
     R_t & = \frac{(1,\dots,1)B X^* \bar a ^*}{(1,\dots,1)B x^*}=- \frac{(1,\dots,1)X^* (1,\dots,1)^T}{\sum_{i=1}^d s_i}\\
     &= -\frac{(1,\dots,1)x^*}{\sum_{i=1}^d s_i} =-\frac{(1,\dots,1)B^{-1}s}{\sum_{i=1}^d s_i}\\
     & = -(1,\dots,1) B^{-1} (\beta_1,\dots,\beta_d)^T \fa t \in \R
  \end{align*}
  for the transit time and
  \begin{align*}
     M_t & = \frac{(1,\dots,1)X^* \bar a ^*}{(1,\dots,1) x^*}= -\frac{(1,\dots,1)B^{-1}X^*(1,\dots,1)^T}{\sum_{i=1}^d x_i^*}\\
     &= -\frac{(1,\dots,1)B^{-1}x^*}{\sum_{i=1}^d x_i^*}  = -(1,\dots,1) B^{-1} (\eta_1,\dots,\eta_d)^T \fa t \in \R
  \end{align*}
  for the mean age.  Note that both quantities do not depend on $t$, and this finished the proof of this proposition.
\end{proof}

Note that derivation of the autonomous quantities for transit time $R$ and mean age $M$ in Proposition~\ref{theo3} required the autonomous compartmental system \eqref{ode1} to be in equilibrium, and the classical approach to transit times, as outlined in Section~\ref{sec3}, is not applicable for autonomous systems not in equilibrium. It is very important to note that Definition~\ref{def1} is useful for autonomous systems also, since it is applicable to systems that are not in equilibrium. For such autonomous systems, transit times and mean ages will depend on time in general, and although they converge to $R$ and $M$ in the limit $t\to\infty$, they might be very different to $R$ and $M$.

\section{Mean ages and transit times for the CASA model}\label{sec7}

Here we illustrate predicted changes in the mean age of carbon leaving and remaining in the system for a terrestrial carbon model under a climate change scenario. We consider a modification of the CASA model as used in \cite{Buermann_07_1} globally without resolving the spatial details of carbon pools using nine pools representing the global terrestrial carbon (e.g.~three pools for plant biomass, or litter or soil organic matter). This caused the model to be precisely of the form of \eqref{eqn1}. Climate change was simulated by increasing atmospheric $\mathrm{CO_2}$ over time, which affected both $B(t)$ and $s(t)$ in \eqref{eqn1}. Increased $\mathrm{CO_2}$ directly increases carbon inputs $s(t)$ through carbon dioxide fertilization. They also directly increase mean global temperatures. This increases the carbon loss rates from some of the carbon pools, changing components of $B(t)$, and also has an effect on  $s(t)$. Thus increased $\mathrm{CO_2}$ alters the input and loss rates of components of the terrestrial carbon cycle, making both the sign and magnitude of the net change in carbon storage dependent the sensitivity of carbon inputs and loss rates.

We simulated changes in atmospheric $\mathrm{CO_2}$ using
\begin{displaymath} \label{eqnCO2}
  x_a(t) = 1715\exp\big(0.0305t/(1715+\exp(0.0305t)-1)\big)\,,
\end{displaymath}
where $x_a(t)$ is the atmospheric carbon dioxide concentration in parts per million and $t$ is years since the year $1850$. This represents a plausible time course of atmospheric $\mathrm{CO_2}$ from year 1850 ($t=0$) to 2500 ($t=650$) under a zero-mitigation, business as usual global change scenario \cite{Raupach_11_1} (illustrated in Fig.~\ref{fig1}a).

\begin{center}
  \begin{figure}[h]
    \includegraphics[width=12.5cm]{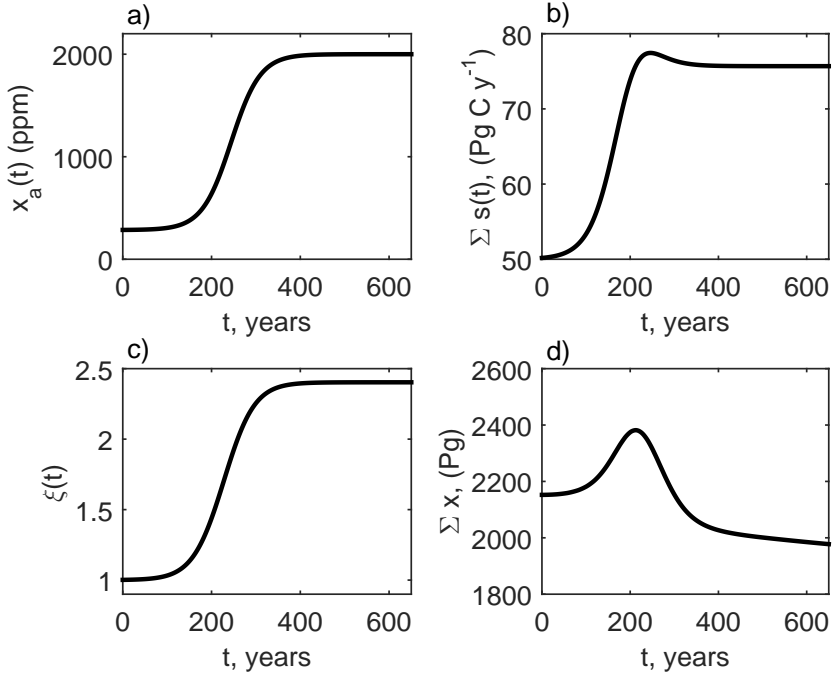}
        \caption{\label{fig1}Forcing functions and solution of the simplified CASA model. a) Nonautonomous dynamics are driven by changes in atmospheric $\mathrm{CO_2}$ over time as given by $x_a(t)$. b) The increased $\mathrm{CO_2}$ alters total carbon inputs per unit time via $\Sigma s(t)$. c) Increasing $\mathrm{CO_2}$ also increases temperatures which increases litter and soil carbon decomposition rates via $\xi(t)$. d) the resulting solution of total terrestrial carbon over time. Parameters for this model are as given in the text but also with $b_{11}=-0.{67}$, $b_{22}=-0.2$, $b_{33}=-0.04$, $b_{41}=0.5092$, $b_{42}=0.0260$, $b_{44}=-2.5$, $b_{51}=0.1608$, $b_{52}=0.1740$, $b_{55}=-0.4$, $b_{63}=0.04$, $b_{66}=-0.25$, $b_{74}=1.1250$, $b_{75}=0.1530$, $b_{76}=0.06$, $b_{77}=-0.7$, $b_{78}=0.0103$, $b_{79}=0.0002$, $b_{85}=0.042$, $b_{86}=0.07$, $b_{87}=0.3525$, $b_{88}=-0.023$, $b_{97}=0.0045$, $b_{98}=0.0001$, $b_{99}=-0.0004$.}
  \end{figure}
\end{center}

The effect of $\mathrm{CO_2}$  on mean global temperatures is modelled as
\begin{equation} \label{GlobalTemp}
  T_s(t)=T_{s0}+\frac{\sigma}{\ln(2)}\ln(x_a(t)/285)\,,
\end{equation}
where $T_{s0}=15$ is the mean land surface temperature in $1850$, and $\sigma$ is the sensitivity of global temperatures to $x_a(t)$. We chose an upper extreme of $\sigma=4.5$ based on the literature because the resulting simulation emphasises well the interplay between increased carbon input rates and carbon loss rates \cite{Scheffer_06_1}. Changes in carbon input rates are simulated using
\begin{equation} \label{eqnCarbonS}
  s(t) = (s_1(t), s_2(t), s_3(t), 0, 0, 0, 0, 0, 0)
\end{equation}
with $s_i(t)=f_i \alpha s_0(1+\beta(x_a(t), T_s(t))\ln(x_a(t)/285))$, where $f_i=0.33$ is the proportion of carbon input going to the different carbon pools,  $\alpha=0.5$ is the proportion of gross primary production that remains after respiration and $\beta$ is the sensitivity of $s(t)$ to $x_a(t)$ and $T_s(t)$, given by
\begin{displaymath} \label{CASABeta}
  \beta(x_a(t), T_s(t)) = \frac{3\rho x_a(t) \Gamma(T_s(t))}{(\rho x_a(t)-\Gamma(T_s(t)))(\rho x_a(t)+2\Gamma(T_s(t)))}\,,
\end{displaymath}
where $x=0.65$ is the ratio of the intracellular $\mathrm{CO_2}$ to $x_a(t)$, and $\Gamma(T_s(t))$ is given by
\begin{displaymath} \label{CASAGamma}
  \Gamma(T_s(t))=42.7+1.68(T_s(t)-25)+0.012(T_s(t)-25)^2\,,
\end{displaymath}
see \cite{Polglase_92_1}. The solution of \eqref{eqnCarbonS} with changes in $x_a(t)$ and $T_s(t)$ as described above is illustrated in Fig.~\ref{fig1}b.
The matrix controlling the rates of carbon transfer and loss from the system is given by
\begin{displaymath}\label{eqnCarbonB}{\small
  B(t) \!= \!\begin{pmatrix}
   b_{11}\hspace{-0.05cm}& 0 & 0 & 0 & 0 & 0 & 0 & 0 & 0 \\
   0  & b_{22}\hspace{-0.05cm} & 0 & 0 & 0 & 0 & 0 & 0 & 0 \\
   0  & 0 & b_{33}\hspace{-0.05cm} &  0 & 0 & 0 & 0 & 0 & 0 \\
   b_{41}\hspace{-0.05cm}  & b_{42}\hspace{-0.05cm} & 0 & b_{44}\xi(T_s(t))\hspace{-0.05cm} & 0 & 0 & 0 & 0 & 0 \\
   b_{51} \hspace{-0.05cm} & b_{52}\hspace{-0.05cm} & 0 & 0 & b_{55}\xi(T_s(t))\hspace{-0.05cm} & 0 & 0 & 0 & 0 \\
   0& 0& b_{63}\hspace{-0.05cm} & 0 & 0 & b_{66}\xi(T_s(t))\hspace{-0.05cm} & 0 & 0 & 0 \\
   0& 0& 0 & b_{74}\xi(T_s(t))\hspace{-0.05cm} & b_{75}\xi(T_s(t))\hspace{-0.05cm}  & b_{76}\xi(T_s(t))\hspace{-0.05cm} & b_{77}\xi(T_s(t))\hspace{-0.05cm} & b_{78}\xi(T_s(t))\hspace{-0.05cm}  & b_{79}\xi(T_s(t)) \\
   0& 0& 0 & 0 & b_{85}\xi(T_s(t))\hspace{-0.05cm}  & b_{86}\xi(T_s(t))\hspace{-0.05cm} & b_{87}\xi(T_s(t))\hspace{-0.05cm} & b_{88}\xi(T_s(t))\hspace{-0.05cm}  & b_{89}\xi(T_s(t)) \\
   0& 0& 0 & 0 & 0 & 0& b_{97}\xi(T_s(t))\hspace{-0.05cm} & b_{98}\xi(T_s(t))\hspace{-0.05cm}  & b_{99}\xi(T_s(t))
  \end{pmatrix}}\,,
\end{displaymath}
indicating that it is the loss rates of pools $i=\{3,\dots,9\}$ that change with time. The coefficients $b_{ij}$ are listed in the legend to Fig.~1, and
\begin{equation} \label{CASAQ10}
  \xi(T_s(t))=\xi_b^{0.1T_s(t)-2}\,,
\end{equation}
where $\xi(T_s(t))$ is the scaling of decomposition rates at $T_s=20$ degrees Celsius. Equation \eqref{CASAQ10} is illustrated in Fig.~\ref{fig1}c.

To define the model initial conditions we assume that $x_a(t)=x_a(0)$ for all $t<0$ and that $x(0)$ has reached the positive equilibrium solution of the resulting system of autonomous equations. The model is then simulated forward from this initial condition using \eqref{GlobalTemp} as the forcing function. Under this simulated scenario, total land carbon increases then decreases over time as shown in Fig.~1d. This would represent an initial net uptake of carbon from the atmosphere due to carbon dioxide fertilization followed ultimately by a net carbon loss from the land back to the atmosphere due to global warming (Fig.~\ref{fig2} shows how this carbon change over time is distributed amongst the different components of $x(t)$).

\begin{center}
  \begin{figure}[h]
    \includegraphics[width=12.5cm]{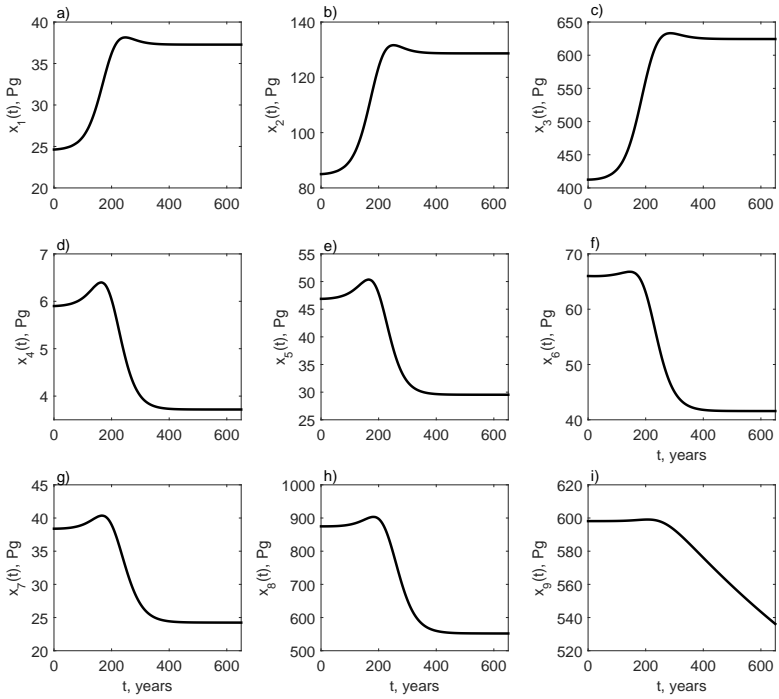}
    \caption{\label{fig2}Breakdown of the contributions of the different vegetation carbon pools to the change in the overall terrestrial carbon storage dynamics illustrated in Fig.~\ref{fig1}d. Pools $x_1$, $x_2$ and $x_3$ are carbon in leaves, roots and wood, respectively; pools $x_4$ to $x_6$ are carbon in different forms of litter, and pools $x_7$ to $x_9$ are carbon in different forms of soil.}
  \end{figure}
\end{center}

Calculations of the transit time $R_t$ and mean age $M_t$ of carbon in the system (according to Definition~\ref{def1}), for the nine-pool model for the climate change simulation described above, show an order of magnitude difference in the absolute values of $R_t$ and $M_t$ (Fig.~\ref{fig3}). This indicates that the average age of carbon stored on land is much older than the average age of carbon leaving the land. Note that at $t=0$ (corresponding to the year $1850$), we assumed that $M_0= M$, where $M$ is the mean age of the equilibrium solution at $t=0$ according to Proposition~\ref{theo3}.

\begin{center}
  \begin{figure}[h]
    \includegraphics[width=12.5cm]{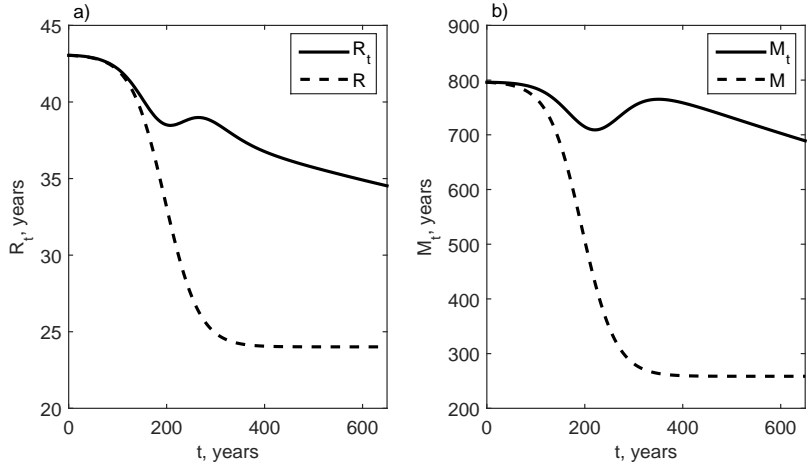}
    \caption{\label{fig3}Mean transit time $R_t$, and mean age $M_t$, compared with the instantaneous quantities $R$ and $M$.}
  \end{figure}
\end{center}

Perhaps surprisingly, the monotonic forcing of $B(t)$ and $s(t)$ translates into non-monotonic effects on $R_t$ and $M_t$. A detailed mathematical investigation of this phenomenon is outside the scope of the present study.

The nonautonomous properties $R_t$ and $M_t$ show contrasting trajectories to the instantaneous properties $R$ and $M$ (which we computed according to Proposition~\ref{theo3}, but note that, since the system is nonautonomous, the assumptions of this proposition are not fulfilled). For example the latter properties change monotonically over time. This must be because the long term outcome of an increase in the input rate of young carbon and an increase in the output rate of old carbon is a decrease in the age of carbon both leaving and remaining in the system. Over the course of the simulation the numerical values of the autonomous and nonautonomous properties become visibly different (Fig. 3). This is because it will take a long time for the values of $R_t$ and $M_t$  to approach $R$ and $M$ due to the small loss rate of the ninth soil pool.

\section{Conclusions}

Models for terrestrial carbon cycling have led to renewed interest in the properties of compartment models. Key quantities that have been studied over many years in compartment models with parameters fixed in time \cite{Eriksson_71_1,Bolin_73_1,Anderson_83_1} are the mean age of particles in the system and the transit time of particles leaving the system.  Formulae for these quantities that give the mean age and transit time in terms of parameters of the system in the long time limit have led to insights, but cannot be applied to the case of changing parameters.

As parameters change, for example in a model of carbon cycling due to climate change, it is not correct to calculate the mean age or transit time from the instantaneous parameter values.  Using the theory of nonautonomous differential equations as a tool, and beginning with time dependent age structured models, we are able to define and derive formulae for the transit time and mean age for particles in the case of temporally changing parameters.  These definitions lead to quantities that reduce to the analogous formulae for the autonomous (constant parameter) case when parameters do not change in time.  However, the formulae for the nonautonomous case also highlight the fact that even in the constant parameter case the transit time and mean age do depend on initial conditions; some of the standard formulae do not include this dependence.

The difference between a transit time or mean age that is computed based on the parameters at a given instant and the better approach of taking into account the history of the system can be substantial as we illustrate using a variant of the CASA model. Thus, the approach we develop here is not just of mathematical interest but is of substantial practical importance as well.

\bigskip

\begin{acknowledgements}
  Martin Rasmussen was supported by an EPSRC Career Acceleration Fellowship EP/I004165/1 (2010--2015) and by funding from the European Union's Horizon 2020 research and innovation programme for the
  ITN CRITICS under Grant Agreement number 643073. Alan Hastings was supported by Army Research Office grant  W911NF-13-1-0305. Katherine E.O.~Todd-Brown is grateful for the support of the Linus Pauling Distinguished Postdoctoral Fellowship program which is funded under the Laboratory Directed Research and Development Program at Pacific Northwest National Laboratory, a multiprogram national laboratory operated by Battelle for the U.S. Department of Energy. Ying Wang was supported by a Ralph E.~Powe Junior Faculty Enhancement Award from Oak Ridge Associated Universities and by a Faculty Investment Program and a Junior Faculty Fellow Program grant from the Research Council and College of Arts and Sciences of the University of Oklahoma Norman Campus. This work was assisted through participation of the authors in the working group \emph{Nonautonomous Systems and Terrestrial Carbon Cycle}, at the \emph{National Institute for Mathematical and Biological Synthesis}, an institute sponsored by the National Science Foundation, the US Department of Homeland Security, and the US Department of Agriculture through NSF award no.~EF-0832858, with additional support from The University of Tennessee, Knoxville.
\end{acknowledgements}

\providecommand{\bysame}{\leavevmode\hbox to3em{\hrulefill}\thinspace}
\providecommand{\MR}{\relax\ifhmode\unskip\space\fi MR }
\providecommand{\MRhref}[2]{%
  \href{http://www.ams.org/mathscinet-getitem?mr=#1}{#2}
}
\providecommand{\href}[2]{#2}

\end{document}